\theoremstyle{plain}
\newtheorem{thm}{Theorem}[section]
\newtheorem{lem}[thm]{Lemma}
\newtheorem{defn}{Definition}[section]
\newtheorem{rem}{Remark}[section]
\newtheorem{exam}[thm]{Example}
\numberwithin{equation}{section}
\newcommand{\Hom}{\mathrm{Hom}}
\newcommand{\lfl}{\left\lfloor}
\newcommand{\rfl}{\right\rfloor}
\newcommand{\ZZ}{\mathbb Z}
\newcommand{\CC}{\mathbb C}
\newcommand{\RR}{\mathbb R}
\newcommand{\HH}{\mathbb H}
\newcommand{\csp}{\null\hskip 20pt}
\newcommand{\ccccsp}{\null\hskip 160pt}
\newcommand{\vsp}{\vskip 0.1cm}
\newcommand{\vvsp}{\vskip 0.2cm}
\begin{document}

\title[A NEW BASIS FOR THE SPACE OF MODULAR FORMS]
{A new basis for the space of modular forms}
\author{Shinji Fukuhara}
\subjclass[2000]{Primary 11F11; Secondary 11F67, 11F30}
\keywords{modular forms, cusp forms, periods of modular forms,
Eisenstein series}
\thanks{The author wishes to thank Professors Noriko Yui and Yifan Yang
for the helpful comments.}
\thanks{\it{Address.} \rm{Department of Mathematics,
  Tsuda College, Tsuda-machi 2-1-1, \\
  Kodaira-shi, Tokyo 187-8577, Japan
  (e-mail: fukuhara@tsuda.ac.jp).}
}

\begin{abstract}
Let $G_{2n}$ be the Eisenstein series of weight $2n$
for the full modular group $\Gamma=SL_2(\ZZ)$.
It is well-known that the space $M_{2k}$ of modular forms
of weight $2k$ on $\Gamma$ has a basis 
$\{G_{4}^\alpha G_{6}^\beta\ |\ \alpha,\beta\in\ZZ,\ \alpha,\beta\geq 0,\
  4\alpha+6\beta=2k\}$.
In this paper we will exhibit another (simpler) basis for $M_{2k}$.
It is given by  
   $\{G_{2k}\}\cup\{G_{4i}G_{2k-4i}\ |\ i=1,2,\ldots,d_k\}$
if $2k\equiv 0\pmod 4$, and
   $\{G_{2k}\}\cup\{G_{4i+2}G_{2k-4i-2}\ |\ i=1,2,\ldots,d_k\}$
if $2k\equiv 2\pmod 4$ where $d_k+1=\dim_{\CC} M_{2k}$.
\end{abstract}

\maketitle

\section{Introduction and statement of results}
\label{sect1}

Modular forms of one variable have been studied for a long time.
They appear in many areas of mathematics and in theoretical physics.
In this paper we consider the space $M_{2k}$ of modular forms 
of weight $2k$, and find a simple basis for $M_{2k}$
in terms of Eisenstein series, which is different from the classically
known standard basis. A motivation for looking for a new basis will be 
explained below.

Throughout the paper, we use the following notation:
\begin{align*}
  k &\text{\ is an integer greater than or equal to }1, \\
  \Gamma&:=SL_2(\ZZ) \text{\ \ (the full modular group)}, \\
  M_{2k}&:=
    \text{the $\CC$-vector space of modular forms of weight $2k$ on $\Gamma$}, \\
  S_{2k}&:=
    \text{the $\CC$-vector space of cusp forms of weight $2k$ on $\Gamma$}, \\
  S_{2k}^*&:=
    \Hom_\CC( S_{2k},\CC) \text{\ \ (the dual space of $S_{2k}$)}, \\
  d_k&:=\begin{cases}
       \lfl\frac{k}{6}\rfl-1
         & \mathrm{if\ \ \ } 2k\equiv 2\pmod {12} \\
       \lfl\frac{k}{6}\rfl
         & \mathrm{if\ \ \ } 2k\not\equiv 2\pmod {12}
       \end{cases}
  \end{align*}
where $\lfloor x\rfloor$ denotes the greatest integer not exceeding $x\in\RR$.
We note that
$$\dim_{\CC} S_{2k}=d_k \text{\ \ \ and\ \ \ } \dim_{\CC}  M_{2k}=d_k+1.$$
Let $B_{2n}$ is the $2n$th Bernoulli number and $\sigma_{2n-1}(m)$ is 
the $(2n-1)$th divisor function. Namely, 
  $$\sigma_{2n-1}(m):=\sum_{0<d|m}d^{2n-1}\ \ \ (n\geq 1).$$
Then the Eisenstein series of weight $2n$ for $\Gamma$ is defined by 
  $$G_{2n}(z):=-\frac{B_{2n}}{4n}
    +\sum_{m=1}^{\infty}\sigma_{2n-1}(m)e^{2\pi imz} $$
where $z\in \HH:=\{z\in\CC\ |\ \Im(z)>0\}$.

The classically well-known basis for $M_{2k}$ is the following set
(Serre \cite[p.\ 89]{SE2}):
  $$\{G_{4}^\alpha G_{6}^\beta\ |\ \alpha,\beta\in\ZZ,\ \alpha,\beta\geq 0,\
  4\alpha+6\beta=2k\}.$$
However, the Fourier coefficients of these forms are not so 
simple 
when we write down the coefficients as sums of products of divisor
functions.
This will motivate us to look for a new simpler basis for $M_{2k}$,
consisting of modular forms 
whose Fourier coefficients are convolution sums of two divisor functions.
Our result is formulated in the following theorem:
\begin{thm}\label{thm1.1}
\begin{enumerate}
\item
If $2k\equiv 0\pmod 4$ then
\begin{equation*}
   \{G_{2k}\}\cup\{G_{4i}G_{2k-4i}\ |\ i=1,2,\ldots,d_k\}
\end{equation*}
form a basis for $M_{2k}$.
\item
If $2k\equiv 2\pmod 4$ then
\begin{equation*}
  \{G_{2k}\}\cup\{G_{4i+2}G_{2k-4i-2}\ |\ i=1,2,\ldots,d_k\}
\end{equation*}
form a basis for $M_{2k}$.
\end{enumerate}
\end{thm}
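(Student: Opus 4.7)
The proposed set has cardinality $d_k+1=\dim_\CC M_{2k}$, so it suffices to prove linear independence. Moreover, $M_{2k}$ decomposes as $\CC\cdot G_{2k}\oplus S_{2k}$, and for each product $G_{2a}G_{2b}$ appearing in the list (with $a+b=k$ and $a,b\geq 2$) the constant Fourier coefficient is
\[
    \frac{B_{2a}B_{2b}}{16ab}\neq 0.
\]
Hence every such product can be written uniquely as $\lambda_i G_{2k}+f_i$ with $\lambda_i\in\CC^\times$ and $f_i\in S_{2k}$, and the theorem reduces to the claim that the $d_k$ cusp forms $f_1,\dots,f_{d_k}$ are $\CC$-linearly independent in $S_{2k}$.

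To detect this independence, the plan is to pair with a suitable family of functionals in $S_{2k}^*$. A very effective such family comes from the Eichler--Shimura theory of period polynomials: to each $f\in S_{2k}$ one associates a period polynomial $r_f(X)$ of degree at most $2k-2$, and extracting the coefficient of a specific monomial $X^m$ defines a nontrivial element of $S_{2k}^*$. The crucial input is an explicit formula --- essentially going back to Zagier --- expressing $r_{G_{2a}G_{2b}}(X)$ as a concrete linear combination of monomials with coefficients built from Bernoulli numbers and binomial coefficients; subtracting the (elementary) period polynomial of $\lambda_i G_{2k}$ then yields an explicit description of $r_{f_i}$.

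The problem thus reduces to showing that the $d_k$ explicit polynomials $r_{f_1},\dots,r_{f_{d_k}}$ are linearly independent, or equivalently that some $d_k\times d_k$ minor of their coefficient matrix in the monomial basis is nonzero. The parity hypothesis in the statement --- either every factor has weight $\equiv 0\pmod 4$ or every factor has weight $\equiv 2\pmod 4$ --- enters here, since it controls which monomials $X^m$ appear with nonzero coefficient in $r_{G_{2a_i}G_{2b_i}}$, and hence the shape of the matrix.

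The main obstacle is this last non-vanishing step. A natural attack is to order the $f_i$ by increasing $i$ and, after selecting the appropriate monomial functionals, to argue that the resulting matrix is triangular with nonzero diagonal; each diagonal entry then reduces to a non-vanishing claim about an explicit alternating sum of products of Bernoulli numbers and binomials. Carrying this out cleanly, uniformly in $k$ and simultaneously for both congruence classes $2k\equiv 0,2\pmod 4$, is where the principal technical work lies.
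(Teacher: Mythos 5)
Your reduction is sound: the proposed set has the right cardinality, the constant term of $G_{2a}G_{2b}$ is $B_{2a}B_{2b}/(16ab)\neq 0$, and splitting $M_{2k}=\CC\,G_{2k}\oplus S_{2k}$ correctly reduces the theorem to the linear independence of the cuspidal projections $f_1,\dots,f_{d_k}$. Up to that point you are on essentially the same track as the paper, which carries out the same reduction via the (extended) Petersson product and the orthogonality $(R_{n},G_{2k})=0$.

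The gap is that the entire content of the theorem lies in the step you defer. You propose to evaluate the functionals ``coefficient of $X^m$ in the period polynomial'' --- i.e.\ the periods $r_m$ up to binomial factors --- on the $f_i$, and to show that some $d_k\times d_k$ minor of $\bigl(r_{m_j}(f_i)\bigr)$ is nonzero, hoping the matrix becomes triangular with diagonal entries given by explicit Bernoulli sums. No choice of exponents $m_j$ is specified, no reason is given why triangularity should hold, and the non-vanishing of such a minor is exactly as hard as the theorem itself: the periods $r_0,\dots,r_{2k-2}$ satisfy nontrivial linear relations as elements of $S_{2k}^*$, so one must know \emph{which} $d_k$ of them remain linearly independent. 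The paper closes this with two specific inputs that your sketch does not supply: (i) Rankin's identity $(f,G_{2n}G_{2k-2n})=\frac{1}{(2i)^{2k-1}}r_{2k-2}(f)\,r_{2n-1}(f)$ for a normalized Hecke eigenform $f$, which factors the pairing matrix into a nonzero scalar (a product of the $r_{2k-2}(f_i)$, nonzero via the Euler product of $L(f_i,s)$ at $s=2k-1$) times the period matrix $\bigl(r_{4j\pm1}(f_i)\bigr)$; and (ii) the theorem of \cite{FU5} that the particular periods $r_{4i\pm1}$, $i=1,\dots,d_k$, form a basis of $S_{2k}^*$ --- this is where the parity hypothesis on the weights genuinely enters, and it is itself a nontrivial result proved elsewhere. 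Without (ii) or an equivalent explicit non-vanishing statement, your argument does not close.
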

Note that the $n$th Fourier coefficients of $G_{4i}G_{2k-4i}$ is
  $$\sum_{l=0}^{n}\sigma_{4i-1}(l)\sigma_{2k-4i-1}(n-l)$$
where we set $\sigma_{2n-1}(0):=-B_{2n}/(4n)$ by convention.

We will also find a new basis for the space of cusp forms on $\Gamma$
in the following theorem:
\begin{thm}\label{thm1.2}
\begin{enumerate}
\item
If $2k\equiv 0\pmod 4$ then
\begin{equation*}
   \{G_{4i}G_{2k-4i}+\frac{B_{4i}}{4i}\frac{B_{2k-4i}}{2k-4i}
     \frac{k}{B_{2k}}G_{2k}\ |\ i=1,2,\ldots,d_k\}
\end{equation*}
form a basis for $S_{2k}$.
\item
If $2k\equiv 2\pmod 4$ then
\begin{equation*}
  \{G_{4i+2}G_{2k-4i-2}+\frac{B_{4i+2}}{4i+2}\frac{B_{2k-4i-2}}{2k-4i-2}
     \frac{k}{B_{2k}}G_{2k}\ |\ i=1,2,\ldots,d_k\}
\end{equation*}
form a basis for $S_{2k}$.
\end{enumerate}
\end{thm}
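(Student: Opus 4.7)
The plan is to deduce Theorem \ref{thm1.2} from Theorem \ref{thm1.1} by projecting the basis of $M_{2k}$ onto $S_{2k}$ along the complementary line $\CC G_{2k}$. Since $\Gamma$ has a single cusp, a form in $M_{2k}$ is a cusp form precisely when its constant Fourier coefficient vanishes, so $M_{2k} = S_{2k} \oplus \CC G_{2k}$ (the second summand is a complement because $G_{2k}$ has nonzero constant term $-B_{2k}/(4k)$). Projection onto $S_{2k}$ is then $f \mapsto f - (a_0(f)/a_0(G_{2k}))\,G_{2k}$, where $a_0$ denotes the constant Fourier coefficient.

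First I would compute the relevant constant terms. From the definition, $a_0(G_{2n}) = -B_{2n}/(4n)$, so $a_0(G_{4i}G_{2k-4i}) = \frac{B_{4i}}{4i}\cdot\frac{B_{2k-4i}}{2k-4i}$. Projecting $G_{4i}G_{2k-4i}$ onto $S_{2k}$ along $\CC G_{2k}$ therefore yields an element of the form $G_{4i}G_{2k-4i} + c_i G_{2k}$, with $c_i$ determined by forcing the constant term to vanish. This produces exactly the element displayed in the theorem and, by construction, lies in $S_{2k}$. Case (2) is entirely analogous, replacing $4i$ by $4i+2$ throughout and appealing to Theorem \ref{thm1.1}(2).

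Second, I would verify linear independence. Write $h_i$ for the prescribed element, so $h_i = G_{4i}G_{2k-4i} + c_i G_{2k}$. A relation $\sum_{i=1}^{d_k} \lambda_i h_i = 0$ rearranges to
\begin{equation*}
  \sum_{i=1}^{d_k} \lambda_i\, G_{4i}G_{2k-4i} + \Bigl(\sum_{i=1}^{d_k} \lambda_i c_i\Bigr) G_{2k} = 0,
\end{equation*}
which is a linear dependence among the basis elements of $M_{2k}$ furnished by Theorem \ref{thm1.1}. Hence every $\lambda_i = 0$. Since $\{h_i\}$ has exactly $d_k = \dim_\CC S_{2k}$ elements, it is a basis for $S_{2k}$.

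I do not expect any genuine obstacle: the entire argument is a linear-algebra reduction to Theorem \ref{thm1.1}, with the only computational content being the constant-term calculation that pins down the coefficient $c_i$.
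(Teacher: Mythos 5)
Your proposal is correct and follows essentially the same route as the paper: Theorem \ref{thm1.1} gives linear independence of $\{G_{2k}\}\cup\{G_{4i}G_{2k-4i}\}$, the displayed combinations are obtained by a triangular change of basis and hence remain independent, they lie in $S_{2k}$, and a dimension count finishes the argument. The only thing to fix is an arithmetic slip in your constant-term computation: since $G_{4i}=G_{2n}$ with $n=2i$, one has $a_0(G_{4i})=-B_{4i}/(8i)$, not $-B_{4i}/(4i)$, so $a_0(G_{4i}G_{2k-4i})=\tfrac14\cdot\tfrac{B_{4i}}{4i}\cdot\tfrac{B_{2k-4i}}{2k-4i}$; combined with $a_0(G_{2k})=-B_{2k}/(4k)$ this factor of $\tfrac14$ is exactly what turns the naive $4k/B_{2k}$ into the coefficient $k/B_{2k}$ appearing in the statement. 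With that correction your projection does land on the displayed elements, and your verification is actually a useful addition, since the paper merely asserts membership in $S_{2k}$ without checking the constant term.
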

We note that, for $\Gamma=\Gamma_0(2)$, similar but slightly different 
formulas were given in \cite[Theorem 1.6]{FY1}.

\begin{exam}
For $M_{36}$, we have a basis
  $$\{G_{36},\ G_{4}G_{32},\ G_{8}G_{28},\ G_{12}G_{24}\},$$
and for $S_{36}$,
\begin{align*}
  &\{\ G_{4}G_{32}-\frac{1479565184909325423}{286310154497221833818240}G_{36},
  \ G_{8}G_{28}-\frac{651138973032093}{122102860006168135010720}G_{36},\\
  &\ccccsp \csp \ 
  \ G_{12}G_{24}-\frac{114819293577343}{1149451061437375891652640}G_{36}\} 
\end{align*}
is a basis.
\end{exam}

\section{Preliminaries}
\label{sect2}
Let $f$ be an element of $S_{2k}$.
We write $f$ as a Fourier series
\begin{equation*}
  f(z)=\sum_{l=1}^{\infty}a_le^{2\pi ilz}.
\end{equation*}
Let $L(f,s)$ be the L-series of $f$.
Namely $L(f,s)$ is the analytic continuation of
\begin{equation*}
  \sum_{l=1}^{\infty}\frac{a_l}{l^s}
  \ \ (\Re(s)\gg 0).
\end{equation*}
Then $n$th period of $f$, \ $r_{n}(f)$,\  is defined by
\begin{equation*}
  r_{n}(f):=\int_{0}^{i\infty}f(z)z^{n}dz
             =\frac{n!}{(-2\pi i)^{n+1}}L(f,n+1)\ \ (n=0,1,\ldots,w).
\end{equation*}
Each period $r_{n}$ can be regarded as a linear map
from $S_{2k}$ to $\CC$, that is,
\begin{equation*}
  r_{n}\in S_{2k}^*=\Hom_\CC(S_{2k},\CC).
\end{equation*}

Here we recall the result of Eichler \cite{EI1}, Shimura \cite{SH1}
and Manin \cite{MA2}:
\begin{thm}[Eichler-Shimura-Manin]\label{thm2.1}
The maps
\begin{align*}
  r^+:S_{2k}&\ \ \to\ \ \CC^{k} \\
     \ f\ \ \ &\ \ \mapsto\ \
     (r_{0}(f),\ r_{2}(f),\ \ldots,\ r_{2k-2}(f))
\end{align*}
and
\begin{align*}
  r^-:S_{2k}&\ \ \to\ \ \CC^{k-1} \\
     \ f\ \ \ &\ \ \mapsto\ \
     (r_{1}(f),\ r_{3}(f),\ \ldots,\ r_{2k-3}(f))
\end{align*}
are both injective.

In other words,
\begin{enumerate}
\item
the even periods
\begin{equation*}
  r_{0},\ r_{2},\ \ldots,\ r_{2k-2}
\end{equation*}
span the vector space $S_{2k}^*$;
\item
the odd periods
\begin{equation*}
  r_{1},\ r_{3},\ \ldots,\ r_{2k-3}
\end{equation*}
also span $S_{2k}^*$.
\end{enumerate}
\end{thm}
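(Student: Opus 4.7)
The plan is to pass through the \emph{period polynomial}. For $f\in S_{2k}$, set
\begin{equation*}
  r_f(X) := \int_0^{i\infty}f(z)(X-z)^{2k-2}\,dz
         = \sum_{n=0}^{2k-2}(-1)^n\binom{2k-2}{n}r_n(f)X^{2k-2-n}.
\end{equation*}
Since $2k-2$ is even, decomposing $r_f = r_f^+ + r_f^-$ into its parts of even (resp.\ odd) degree in $X$ matches the theorem's two lists exactly: $r_f^+$ packages the even periods $(r_0(f),r_2(f),\dots,r_{2k-2}(f))$ and $r_f^-$ packages the odd periods $(r_1(f),r_3(f),\dots,r_{2k-3}(f))$. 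Both assertions therefore reduce to showing that the maps $f\mapsto r_f^\pm$ are injective on $S_{2k}$.

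As a first step I would run the classical Eichler-integral argument. Let
\begin{equation*}
  \tilde f(z) := \int_z^{i\infty} f(w)(w-z)^{2k-2}\,dw.
\end{equation*}
For $\gamma\in\Gamma$ the difference $\tilde f|_{2-2k}\gamma - \tilde f$ is a polynomial $P_\gamma$ in the space $V_{2k-2}$ of polynomials of degree $\le 2k-2$; a short computation using the substitution $w\mapsto -1/w$ together with the modularity of $f$ yields $P_T = 0$ and $P_S(X) = -r_f(X)$. Two consequences are immediate: (a) since $\Gamma$ is generated by $S$ and $T$ with relations $S^2=(ST)^3=I$, the cocycle condition forces the Eichler relations
\begin{equation*}
  r_f|_{2-2k}(1+S)=0,\qquad r_f|_{2-2k}(1+U+U^2)=0,\quad U:=ST;
\end{equation*}
and (b) if $r_f=0$, then $\tilde f$ is $\Gamma$-invariant, hence a holomorphic modular function of weight $2-2k<0$ that decays at the cusp, hence identically zero; $(2k-1)$ successive differentiations then give $f\equiv 0$.

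Next I would upgrade this to the parity statement. The $S$-relation respects parity in $X$, so $r_f^+$ and $r_f^-$ satisfy it separately. The $U$-relation, in contrast, couples the two parities: splitting $r_f|_{2-2k}(1+U+U^2)=0$ into its even and odd parts in $X$ yields two linear identities, each a nontrivial relation between the coefficients of $r_f^+$ and those of $r_f^-$. These coupling relations allow one to deduce $r_f^-=0$ from $r_f^+=0$ (and vice versa) on the image $\{r_f : f\in S_{2k}\}$; either hypothesis thus propagates to $r_f=0$, and Step~1 then forces $f=0$.

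The main obstacle is verifying this non-degeneracy in the parity-mixing step: one must show that on the cuspidal subspace of the period-polynomial space, the linear maps produced by splitting the $U$-relation really are injective in each parity. I would address this by computing the $\pm$-dimensions of the space of polynomial solutions of the two Eichler relations: after removing the one-dimensional Eisenstein contribution from $G_{2k}$ (whose period polynomial can be written down explicitly), each parity has dimension exactly $d_k = \dim S_{2k}$. Together with the injectivity of Step~1, this dimension match forces both $r^+$ and $r^-$ to be individually injective.
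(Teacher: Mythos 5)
First, for calibration: the paper does not prove this statement at all; it is quoted as the classical Eichler--Shimura--Manin theorem with references to \cite{EI1}, \cite{SH1}, \cite{MA2}. So your attempt is to be judged on its own, and its framework is the right one: the period polynomial $r_f$, the Eichler integral $\tilde f$, the cocycle with $P_T=0$ and $P_S=-r_f$, the two relations $r_f|_{2-2k}(1+S)=0$ and $r_f|_{2-2k}(1+U+U^2)=0$, and the Step~1 argument that $r_f=0$ makes $\tilde f$ a $\Gamma$-invariant holomorphic function of negative weight decaying at the cusp, hence zero, hence $f=0$. All of that is correct and standard.

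The gap is in the parity-separation step, and it is essential. Write $W\subset V_{2k-2}$ for the space of polynomial solutions of the two Eichler relations and $W=W^+\oplus W^-$ for its parity decomposition; one has $\dim W^+=d_k+1$ (the extra line spanned by $X^{2k-2}-1$) and $\dim W^-=d_k$, so your dimension count is essentially right. But the inference ``$\dim W^{\pm}_{\mathrm{cusp}}=d_k=\dim S_{2k}$ together with injectivity of $f\mapsto r_f$ forces $r^{\pm}$ injective'' is not valid linear algebra: injectivity of $r^-$ is equivalent to $r(S_{2k})\cap W^+=0$, and a $d_k$-dimensional subspace of the $(2d_k+1)$-dimensional space $W$ may meet the $(d_k+1)$-dimensional subspace $W^+$ in anything from $0$ up to $d_k$ dimensions. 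Likewise, the claim that the parity-mixed components of the $U$-relation let one deduce $r_f^-=0$ from $r_f^+=0$ cannot hold for general elements of $W$ (both $W^+$ and $W^-$ are nonzero, e.g.\ already for $2k=12$), so it could only hold on the image of $S_{2k}$ --- and establishing that is precisely the content of the theorem, not a consequence of the relations. The standard way to close the gap is analytic rather than combinatorial: a Haberland-type formula expresses the Petersson product $(f,g)$, up to an explicit nonzero constant, as a pairing of $r_f^+$ with $\overline{r_g^-}$ (equivalently of $r_f^-$ with $\overline{r_g^+}$); then $r_f^+=0$ gives $(f,g)=0$ for all $g\in S_{2k}$, hence $f=0$, and symmetrically for $r^-$. (Rankin's identity \eqref{eqn3.3}, which the paper does use, is a close relative of this mechanism: it ties Petersson products to products of periods.) Without some such input coupling the two parities through the inner product --- or the equivalent full cohomological statement for $S_{2k}\oplus\overline{S_{2k}}$ --- the cocycle relations and dimension counts alone do not yield the theorem.
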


However, these periods are not linearly independent.
A natural question was raised in \cite{FU5}: 
which periods form a basis for $S_{2k}^*$ ?
A satisfactory answer was obtained in the same paper \cite{FU5}.

To state the result in \cite{FU5} we need 
the following notation and convention:
\begin{defn}\label{defn2.1}
For an integer $i$ such that $1\leq i\leq d_k$, let
\begin{equation*}
  4i\pm 1:=\begin{cases}
       4i+1 & \mathrm{if\ \ \ } 2k\equiv 2\pmod 4 \\
       4i-1 & \mathrm{if\ \ \ } 2k\equiv 0\pmod 4.
       \end{cases}
\end{equation*}
\end{defn}

Now we can state our result in \cite{FU5}:
\begin{thm}[\cite{FU5}]\label{thm2.2}
\begin{equation*}
  \{r_{4i\pm1}\ |\ i=1,2,\ldots,d_k\}
\end{equation*}
form a basis for $S_{2k}^*$.
\end{thm}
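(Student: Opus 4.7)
The plan is to leverage Theorem 2.1(2): the $k-1$ odd periods $r_1,r_3,\ldots,r_{2k-3}$ already span $S_{2k}^*$, whose dimension is $d_k$. Since the candidate basis has cardinality $d_k$, it suffices to prove either linear independence or spanning; I will aim for the latter by describing the $k-1-d_k$ relations among the odd periods explicitly and then checking that every non-chosen odd period can be expressed as a linear combination of $\{r_{4i\pm1}\mid 1\le i\le d_k\}$.

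The main tool is the period polynomial $P_f(X):=\int_0^{i\infty}f(z)(X-z)^{2k-2}\,dz$ for $f\in S_{2k}$; its coefficients encode the $r_n(f)$ up to explicit binomial and sign factors. The Eichler-Shimura cocycle conditions $P_f|(1+S)=0$ and $P_f|(1+U+U^2)=0$, with $S$ and $U$ the usual order-$2$ and order-$3$ generators of $\mathrm{PSL}_2(\ZZ)$, encode all linear relations between the periods. The first gives, for odd $n$, the symmetry $r_{2k-2-n}(f)=r_n(f)$, immediately halving the count of independent odd periods. The second, after isolating odd-degree terms, becomes a family of linear relations with binomial-coefficient entries $\binom{2k-2}{n}$ among the remaining odd periods.

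The heart of the argument is a combinatorial re-indexing of this second family by the residue of the period index modulo $4$. The choice $4i\pm1$ in Definition 2.1 is designed precisely so that, after applying the $S$-symmetry, the three-term $U$-relations become triangular with respect to the complementary indexing and thus allow one to solve for every non-chosen odd period in terms of the chosen $r_{4i\pm1}$. The two congruence classes $2k\equiv0\pmod 4$ and $2k\equiv2\pmod 4$ are handled in parallel but with mirrored indexing, which accounts for the alternative $\pm$ sign in Definition 2.1.

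The main obstacle, and where the real work lies, is verifying this triangularity: one must check that the relevant submatrix of binomial coefficients is nonsingular and that the indexing exactly matches both subcases of $d_k$, in particular the anomalous case $2k\equiv2\pmod{12}$ where $d_k$ drops by one. Once this combinatorial verification is complete, spanning follows from Theorem 2.1(2) together with the explicit reduction to $\{r_{4i\pm1}\}$, and the dimension count $d_k=\dim_\CC S_{2k}^*$ closes the argument.
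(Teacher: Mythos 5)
The paper itself offers no proof of Theorem~\ref{thm2.2}: it is imported wholesale from \cite{FU5}, so the only fair comparison is with what a complete argument must contain. Your strategy --- use the Eichler--Shimura conditions $P_f|(1+S)=P_f|(1+U+U^2)=0$ to express every odd period in terms of the $d_k$ functionals $r_{4i\pm1}$, then invoke Theorem~\ref{thm2.1}(2) and the dimension count --- is coherent, and the bookkeeping is at least consistent: the $S$-symmetry $r_n=r_{2k-2-n}$ (for odd $n$) together with the $U$-relations do supply exactly $k-1-d_k$ independent relations among $r_1,r_3,\ldots,r_{2k-3}$. Note, however, that even this count is not free: it amounts to the statement that the odd period map is an isomorphism of $S_{2k}$ onto the full space $W^-$ of odd period polynomials, i.e.\ $\dim W^-=d_k$; Theorem~\ref{thm2.1} gives only injectivity, so you should state and justify this, since if $\dim W^-$ exceeded $d_k$ your reduction could not possibly terminate at $d_k$ generators.

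The genuine gap is that the step you yourself label ``where the real work lies'' is never performed. That the $U$-relations become triangular with respect to the complement of the indices $4i\pm1$ --- equivalently, that a specific square matrix of size roughly $k/3$ built from the binomial coefficients $\binom{2k-2}{n}$ is nonsingular, uniformly in the two classes of $2k$ modulo $4$ and in the exceptional case $2k\equiv2\pmod{12}$ where $d_k$ drops --- is precisely the content of the theorem. Everything else in your outline (the cocycle formalism, the dimension count, the appeal to Theorem~\ref{thm2.1}(2)) is routine; the non-vanishing of that binomial determinant is the entire difficulty, it is not automatic, and it is what the cited reference \cite{FU5} actually establishes. As written, your text is a plan for a proof in which the single load-bearing step is an unproved assertion, so it cannot be accepted as a proof of Theorem~\ref{thm2.2}.
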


Next we will display a basis for $S_{2k}$.
For $f,\ g\in S_{2k}$,
let $(f,g)$ denote
the Petersson scalar product.
Then there is a cusp form $R_{n}$, which is characterized
by the formula:
\begin{equation*}
  r_{n}(f)=(R_{n},f)\ \mbox{\ \ for any \ \ } f\in S_{2k}.
\end{equation*}

Passing to the dual space, we obtain a basis for $S_{2k}$.
\begin{thm}[\cite{FU5}]\label{thm2.3}
\begin{equation*}
  \{R_{4i\pm1}\ |\ i=1,2,\ldots,d_k\}
\end{equation*}
form a basis for $S_{2k}$.
\end{thm}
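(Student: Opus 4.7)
The plan is to obtain Theorem \ref{thm2.3} as an immediate consequence of Theorem \ref{thm2.2} by dualizing through the Petersson scalar product. Since $S_{2k}$ is finite dimensional (of dimension $d_k$) and the Petersson product $(\cdot,\cdot)$ is a non-degenerate Hermitian form on $S_{2k}$, the assignment $\Phi\colon S_{2k}\to S_{2k}^*$ defined by $\Phi(g):=(g,\cdot)$ is an antilinear bijection.

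By the very definition of the cusp forms $R_n$, namely $r_n(f)=(R_n,f)$ for every $f\in S_{2k}$, we have $\Phi(R_n)=r_n$ for every admissible $n$. In particular $\Phi$ carries the proposed family $\{R_{4i\pm 1}\mid i=1,\ldots,d_k\}$ onto the family $\{r_{4i\pm 1}\mid i=1,\ldots,d_k\}$, which by Theorem \ref{thm2.2} is a basis of $S_{2k}^*$. Since an antilinear bijection transforms linear relations $\sum c_i R_{4i\pm 1}=0$ into $\sum \overline{c_i}\, r_{4i\pm 1}=0$ (and conversely), it preserves linear independence. The set $\{R_{4i\pm 1}\mid i=1,\ldots,d_k\}$ is therefore linearly independent in $S_{2k}$, and because it has exactly $d_k=\dim_\CC S_{2k}$ elements, it is a basis.

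There is really no computational obstacle; the only points one must record with care are (i) the non-degeneracy of the Petersson product on $S_{2k}$, a classical fact, and (ii) the dimension count $\dim_\CC S_{2k}=\dim_\CC S_{2k}^*=d_k$, which turns the linear independence produced by the duality argument into the basis property without further work. All of the genuine content, including the choice of the index pattern $4i\pm 1$, is already packaged in Theorem \ref{thm2.2}, so Theorem \ref{thm2.3} is essentially a formal consequence.
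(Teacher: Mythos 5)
Your proposal is correct and follows exactly the route the paper indicates: the paper states Theorem \ref{thm2.3} by ``passing to the dual space'' from Theorem \ref{thm2.2}, which is precisely your duality argument via the non-degenerate Petersson product and the defining relation $r_n(f)=(R_n,f)$. The only (harmless) point of care is the linearity/antilinearity convention for $(\cdot,\cdot)$, which you handle correctly since an antilinear bijection still preserves $\CC$-linear independence.
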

This theorem will be needed to prove Theorem \ref{thm1.1}.
Finally some remark on the Petersson scalar product might be in order.
\begin{rem}\label{rem2.1}
{\rm   
Let $f$ and $g$ be modular forms in $M_{2k}$
with at least one of them a cusp form. 
Then the Petersson scalar product $(f,g)$ is defined by
  $$(f,g)=\int_{\Gamma\slash \HH} f(z)\overline{g(z)}y^{2k-2}dxdy$$
where $z=x+iy$.
We note that the Petersson scalar product of an Eisenstein series 
and a cusp form is always zero (refer to \cite[p. 183]{DS1}). 

However, there is a natural extension of the Petersson scalar product
from the space of cusp forms to the space of all modular forms
(Zagier \cite[pp.\ 434--435]{ZA2}). This extended scalar product is always
non-degenerate, and furthermore, it is positive definite if and only if 
$2k\equiv 2\pmod 4$.

Petersson scalar products considered in this article are those of
extended one in the above sense which are always non-degenerate.
}
\end{rem}
\section{Proof of Theorems \ref{thm1.1} and \ref{thm1.2}}
\label{sect3}
In this section, we will give proofs of Theorems \ref{thm1.1} 
and \ref{thm1.2}.
We need the following lemma:
\begin{lem}\label{lem3.1}
Let $V$ be a $\CC$-vector space of dimension n and 
\begin{equation*}
  B:V\times V\to\CC
\end{equation*}
be a non-degenerate bilinear form.
Let 
\begin{equation*}
  \{u_{i}\in V\ |\ i=1,\ldots,n\} \text{\ \ \ and\ \ \ } 
  \{v_{i}\in V\ |\ i=1,\ldots,n\}
\end{equation*}
be two sets of vectors in $V$. Then the determinant
\begin{equation*}
  |B(u_i,v_j)|_{i,j=1,2,\ldots,n}\ne0
\end{equation*}
if and only if both $\{u_{i}\in V\ |\ i=1,\ldots,n\}$ and
$\{v_{i}\in V\ |\ i=1,\ldots,n\}$ are sets of linearly independent
vectors.
\end{lem}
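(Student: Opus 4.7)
The plan is to prove the lemma by reducing it to the multiplicativity of the determinant applied to a simple matrix factorization. First I would fix an arbitrary basis $e_1,\dots,e_n$ of $V$ and record coordinates by writing $u_i=\sum_k U_{ki}\,e_k$ and $v_j=\sum_l W_{lj}\,e_l$, producing two $n\times n$ matrices $U=[U_{ki}]$ and $W=[W_{lj}]$. Let $B_0=[B(e_k,e_l)]_{k,l}$ be the Gram matrix of $B$ with respect to this basis. A direct expansion using bilinearity yields
\begin{equation*}
  \bigl[B(u_i,v_j)\bigr]_{i,j=1,\dots,n} \;=\; U^{T}\,B_0\,W.
\end{equation*}

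Taking determinants and using multiplicativity then gives
\begin{equation*}
  \det\bigl[B(u_i,v_j)\bigr] \;=\; \det(U)\,\det(B_0)\,\det(W).
\end{equation*}
Since $B$ is non-degenerate, its Gram matrix $B_0$ is invertible, so $\det(B_0)\neq 0$. Therefore the left-hand determinant is non-zero if and only if $\det(U)\neq 0$ and $\det(W)\neq 0$. Since $\det(U)\neq 0$ is equivalent to $\{u_1,\dots,u_n\}$ being linearly independent (and likewise for $W$ and $\{v_j\}$), the claimed equivalence follows.

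I do not expect any real obstacle: the only verification is the matrix identity $[B(u_i,v_j)]=U^{T}B_0 W$, which is just the expansion $B(u_i,v_j)=\sum_{k,l} U_{ki}\,B(e_k,e_l)\,W_{lj}$, and the non-degeneracy of $B$ enters exactly once, to guarantee $\det(B_0)\neq 0$. Everything else is elementary linear algebra over $\CC$.
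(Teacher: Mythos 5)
Your proof is correct and complete: the factorization $[B(u_i,v_j)]=U^{T}B_0W$ together with multiplicativity of the determinant and the invertibility of the Gram matrix $B_0$ gives exactly the stated equivalence. The paper omits the proof as ``quite standard,'' and yours is precisely that standard argument, so there is nothing to compare beyond noting agreement.
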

The proof of this lemma is quite standard and we omit it.

\begin{proof}[Proof of Theorem \ref{thm1.1}]
First we assume that $2k\equiv 0\pmod 4$.
We consider two sets of modular forms :
\begin{equation*}
  \{G_{2k}\}\cup\{G_{4i}G_{2k-4i} \ |\ i=1,2,\ldots,d_k\} \text{\ \ \ and\ \ \ } 
  \{G_{2k}\}\cup\{R_{4i-1} \ |\ i=1,2,\ldots,d_k\}.
\end{equation*}
We would like to verify that $G_{2k},\ G_{4i}G_{2k-4i} \ (i=1,2,\ldots,d_k)$
are linearly independent. By virtue of Lemma \ref{lem3.1} it is sufficient
to show that the determinant
\vsp
\begin{equation}\label{eqn3.0}
  \begin{vmatrix}
  (G_{2k},G_{2k}) & (R_{4-1},G_{2k}) & \cdots 
    &(R_{4d_k-1},G_{2k}) \\
  (G_{2k},G_{4}G_{2k-4}) & (R_{4-1},G_{4}G_{2k-4}) 
    & \cdots &(R_{4d_k-1},G_{4}G_{2k-4}) \\
  \cdots & \cdots & \cdots & \cdots \\
  (G_{2k},G_{4d_k}G_{2k-4d_k}) & (R_{4-1},G_{4d_k}G_{2k-4d_k}) 
    & \cdots 
    &(R_{4d_k-1},G_{4d_k}G_{2k-4d_k}) \\
  \end{vmatrix}
\ne 0.
\end{equation}
\vvsp
Since $(G_{2k},G_{2k})\ne 0$ and $(R_{4i-1},G_{2k})=0$ as mentioned in 
Remark \ref{rem2.1}, \eqref{eqn3.0} is equivalent to 
\vsp
\begin{equation}\label{eqn3.1}
  \begin{vmatrix}
  (R_{4-1},G_{4}G_{2k-4}) & (R_{8-1},G_{4}G_{2k-4}) & \cdots 
    &(R_{4d_k-1},G_{4}G_{2k-4}) \\
  (R_{4-1},G_{8}G_{2k-8}) & (R_{8-1},G_{8}G_{2k-8}) & \cdots 
    &(R_{4d_k-1},G_{8}G_{2k-8}) \\
  \cdots & \cdots & \cdots & \cdots \\
  (R_{4-1},G_{4d_k}G_{2k-4d_k}) & (R_{8-1},G_{4d_k}G_{2k-4d_k}) & \cdots 
    &(R_{4d_k-1},G_{4d_k}G_{2k-4d_k}) \\
  \end{vmatrix}
\ne 0.
\end{equation}
\vvsp

Now let $\{f_i\ |\ i=1,2,\ldots,d_k\}$ be a basis for $S_{2k}$ such 
that each $f_i$ is a normalized Hecke eigenform. Then, since  
 $\{R_{4i-1}\ |\ i=1,2,\ldots,d_k\}$
is also a basis for $S_{2k}$ by Theorem \ref{thm2.3},
we know that \eqref{eqn3.1} is equivalent to 
\vsp
\begin{equation}\label{eqn3.2}
  \begin{vmatrix}
  (f_{1},G_{4}G_{2k-4}) & (f_{2},G_{4}G_{2k-4}) & \cdots 
    &(f_{d_k},G_{4}G_{2k-4}) \\
  (f_{1},G_{8}G_{2k-8}) & (f_{2},G_{8}G_{2k-8}) & \cdots 
    &(f_{d_k},G_{8}G_{2k-8}) \\
  \cdots & \cdots & \cdots & \cdots \\
  (f_{1},G_{4d_k}G_{2k-4d_k}) & (f_{2},G_{4d_k}G_{2k-4d_k}) & \cdots 
    &(f_{d_k},G_{4d_k}G_{2k-4d_k}) \\
  \end{vmatrix}
\ne 0.
\end{equation}
\vvsp
To show \eqref{eqn3.2},
we use the following Rankin's identity
(\cite{RA2}, also refer to Kohnen-Zagier \cite{KZ1} noting that
their notation of $r_n(f)$ differs from ours by a factor $i^{n+1}$):
for a normalized eigenform $f$ in $S_{2k}$,
\begin{equation}\label{eqn3.3}
\left(f,G_{2n}G_{2k-2n}
\right)
=\frac{1}{(2i)^{2k-1}}r_{2k-2}(f)r_{2n-1}(f)
\end{equation}
where $n=2,3,\ldots,k-2$.
From this identity we know that \eqref{eqn3.2} 
is equivalent to
\vsp
\begin{equation}\label{eqn3.4}
  \frac{r_{2k-2}(f_1)r_{2k-2}(f_2)\cdots r_{2k-2}(f_{d_k})}{(2i)^{(2k-1)d_k}}
  \begin{vmatrix}
  r_{4-1}(f_1) & r_{4-1}(f_2) & \cdots & r_{4-1}(f_{d_k}) \\
  r_{8-1}(f_1) & r_{8-1}(f_2) & \cdots & r_{8-1}(f_{d_k}) \\
  \cdots & \cdots & \cdots & \cdots \\
  r_{4d_k-1}(f_1) & r_{4d_k-1}(f_2) & \cdots & r_{4d_k-1}(f_{d_k}) \\
  \end{vmatrix}
\ne 0.
\end{equation}
\vvsp
Finally, \eqref{eqn3.4} is equivalent to
\vsp
\begin{equation}\label{eqn3.5}
  \begin{vmatrix}
  (R_{4-1},f_1) & (R_{4-1},f_2) & \cdots &(R_{4-1},f_{d_k}) \\
  (R_{8-1},f_1) & (R_{8-1},f_2) & \cdots &(R_{8-1},f_{d_k}) \\
  \cdots & \cdots & \cdots & \cdots \\
  (R_{d_k-1},f_1) & (R_{d_k-1},f_2) & \cdots & (R_{4d_k-1},f_{d_k}) \\
  \end{vmatrix}
\ne 0.
\end{equation}
\vvsp
Now \eqref{eqn3.5} holds, since both $\{f_i\ |\ i=1,2,\ldots,d_k\}$ 
and $\{R_{4i-1}\ |\ i=1,2,\ldots,d_k\}$ are bases for $S_{2k}$.
This implies the assertion (1) of Theorem \ref{thm1.1}.

Next we assume that $2k\equiv 2\pmod 4$. The argument similar to the above 
proves the assertion (2) of Theorem \ref{thm1.1}. 
This completes the proof.
\end{proof}

\begin{proof}[Proof of Theorem \ref{thm1.2}]
In Theorem \ref{thm1.1} we proved that
\begin{equation*} 
  \{G_{2k}\}\cup\{G_{4i}G_{2k-4i} \ |\ i=1,\ldots,d_k\}
\end{equation*}
is a basis for $M_{2k}$ and, in particular, the members are 
linearly independent. Hence 
$\{G_{2k}\}\cup\{G_{4i}G_{2k-4i}+\frac{B_{4i}}{4i}\frac{B_{2k-4i}}{2k-4i}
     \frac{k}{B_{2k}}G_{2k} \ |\ i=1,\ldots,d_k\}$
are linearly independent. This implies
$\{G_{4i}G_{2k-4i}+\frac{B_{4i}}{4i}\frac{B_{2k-4i}}{2k-4i}
     \frac{k}{B_{2k}}G_{2k} \ |\ i=1,\ldots,d_k\}$
are again linearly independent. Moreover, since 
$G_{4i}G_{2k-4i}+\frac{B_{4i}}{4i}\frac{B_{2k-4i}}{2k-4i}
     \frac{k}{B_{2k}}G_{2k}\in S_{2k}\ (i=1,\ldots,d_k)$,
these form a basis for $S_{2k}$. This completes the proof.
\end{proof}


\end{document}